\numberwithin{equation}{section} \DeclareMathSizes{2}{10}{12}{13}
\newtheorem{thm}{Proposition}[section]
\newtheorem{Thm}[thm]{Th\'{e}or\`{e}me}
\newtheorem{lem}[thm]{Lemme}
\newtheorem{defn}[thm]{D\'{e}finition}
\title{Sur la cat\'{e}gorie d\'{e}riv\'{e}e des faisceaux tordus}
\author{Abhishek Banerjee}
\date{}
\begin{document}

\maketitle

\medskip

\medskip
\centerline{\emph{Dept. of Mathematics, Indian Institute of Science, Bengaluru - 560012, Karnataka, India}}

\centerline{ \emph{Email: abhishekbanerjee1313@gmail.com}}

\medskip

\medskip
\begin{abstract} 

Soit $X$ un sch\'{e}ma quasi-compact et s\'{e}par\'{e} et soit $\alpha\in \check{C}^2(X,
\mathcal O_X^*)$ un cocycle de C\v{e}ch. Nous consid\'{e}rons la cat\'{e}gorie d\'{e}riv\'{e}e 
$D(QCoh(X,\alpha))$ des faisceaux quasi-coh\'{e}rents sur $X$ tordu par $\alpha$. Soit $\Delta(X,\alpha)$ la
plus petite sous-cat\'{e}gorie triangul\'{e}e de $D(QCoh(X,\alpha))$ contenant tous les objets
$u_*\mathcal F^\bullet$, o\`{u} $u:U\longrightarrow X$ est une immersion ouverte avec $U$ affine et
$\mathcal F^\bullet\in D(QCoh(U,u^*(\alpha)))$. Alors, le but de cet article est de montrer que $\Delta(X,\alpha)=D(QCoh(X,\alpha))$. 

\medskip

\centerline{\bf Abstract}

\medskip
Let $X$ be a quasi-compact and separated scheme and let $\alpha\in \check{C}^2(X,
\mathcal O_X^*)$ be a C\v{e}ch cocycle. We consider the derived category $D(QCoh(X,\alpha))$ of 
quasi-coherent sheaves on $X$ twisted by $\alpha$. Let $\Delta(X,\alpha)$ be the smallest
triangulated subcategory of $D(QCoh(X,\alpha))$ containing all the objects $u_*\mathcal F^\bullet$, where $u:U\longrightarrow X$ is an open immersion with $U$ affine and $\mathcal F^\bullet\in D(QCoh(U,u^*(\alpha)))$. Then, the purpose of this article is to show that $\Delta(X,\alpha)=D(QCoh(X,\alpha))$. 
\end{abstract}

\medskip
\noindent \emph{MSC(2010) Subject Classification:  \bf 14F05}

\medskip
\noindent \emph{Keywords: Twisted sheaves, C\v{e}ch cocycles. }

\medskip

\medskip
\section{Introduction}

\medskip

\medskip
Soit $X$ un sch\'{e}ma quasi-compact et s\'{e}par\'{e} et soit $\alpha\in \check{C}^2(X,\mathcal O_X^*)$ un cocycle de C\v{e}ch. Alors, les faisceaux sur $X$ tordu par $\alpha$ ont \'{e}t\'{e} introduits d'abord
par Giraud \cite{Giraud} dans le cadre de ses \'{e}tudes sur la cohomologie non-ab\'{e}lienne. Quand $\alpha$ est trivial, les faisceaux tordus par $\alpha$ sont les m\^{e}mes que les faisceaux sur $X$. Dans \cite{AC1}, \cite{Cald2}, C\u ald\u araru a commenc\'{e} une \'{e}tude syst\'{e}matique de cette th\'{e}orie, en lien avec les faisceaux tordus sur les vari\'{e}t\'{e}s de Calabi-Yau. Plus r\'{e}cemment, la cat\'{e}gorie des faisceaux sur $X$ tordu par $\alpha$ 
a \'{e}t\'{e} \'{e}tudi\'{e}e par des autres auteurs (voir Antieau \cite{Anti},  Lieblich \cite{Max2}, \cite{Max}). Beaucoup de ces articles sont concern\'{e}s 
par la cat\'{e}gorie d\'{e}riv\'{e}e des faisceaux tordus. De plus, il existe des liens intimes entre les faisceaux tordus et les groupes de Brauer ainsi
que les alg\`{e}bres d'Azumaya (voir, par exemple, \cite[Chapter 4]{Huy}). Dans cet article, nous consid\'{e}rons la cat\'{e}gorie d\'{e}riv\'{e}e 
$D(QCoh(X,\alpha))$ des faisceaux quasi-coh\'{e}rents sur $X$ tordu par $\alpha$. Nous \'{e}tablissons le th\'{e}or\`{e}me suivant sur la cat\'{e}gorie d\'{e}riv\'{e}e $D(QCoh(X,\alpha))$. 

\medskip

\begin{Thm}\label{Th1} Soit $X$ un sch\'{e}ma quasi-compact et s\'{e}par\'{e} et soit $\alpha\in \check{C}^2(X,
\mathcal O_X^*)$ un cocycle de C\v{e}ch. Soit $\Delta(X,\alpha)$ la
plus petite sous-cat\'{e}gorie triangul\'{e}e de $D(QCoh(X,\alpha))$ contenant tous les objets
$u_*\mathcal F^\bullet$, o\`{u} $u:U\longrightarrow X$ est une immersion ouverte avec $U$ affine et
$\mathcal F^\bullet\in D(QCoh(U,u^*(\alpha)))$. Alors,  $\Delta(X,\alpha)=D(QCoh(X,\alpha))$. 
\end{Thm}

En particulier, supposons que $\alpha$ est trivial. Alors $D(QCoh(X,\alpha))=D(QCoh(X))$, la cat\'{e}gorie d\'{e}riv\'{e}e des faisceaux quasi-coh\'{e}rents sur $X$.  Il s'ensuit de Theorem \ref{Th1}  que $D(QCoh(X))=\Delta(X)$.  Ici, $\Delta(X)$ est la plus petite sous-cat\'{e}gorie
triangul\'{e}e de $D(QCoh(X))$ contenant tous les objets $u_*\mathcal F^\bullet$ avec $u:U
\longrightarrow X$ une immersion affine et $\mathcal F^\bullet\in D(QCoh(U))$. Nous notons
que le sch\'{e}ma $X$ n'est pas n\'{e}cessairement noeth\'{e}rien. Pour les sch\'{e}mas noeth\'{e}riens, la th\'{e}orie de la cat\'{e}gorie d\'{e}riv\'{e}e des faisceaux quasi-coh\'{e}rents est classique (voir, par exemple, Hartshorne \cite{Hart} pour une exposition). Mais, lorsque $X$ n'est pas noeth\'{e}rien, la
situation est un peu plus compliqu\'{e}e (voir, par exemple, les travaux de Lipman \cite{Lip1},\cite{Lip2} et Neeman \cite{Neeman}). Plus g\'{e}n\'{e}ralement, il est naturel de se demander
si on peut avoir une th\'{e}orie similaire pour la cat\'{e}gorie d\'{e}riv\'{e}e des faisceaux quasi-coh\'{e}rents sur $X$ tordus par $\alpha$. Alors, le r\'{e}sultat de Th\'{e}or\`{e}me \ref{Th1} nous permet
d'\'{e}tudier la cat\'{e}gorie $D(QCoh(X,\alpha))$  en termes des cat\'{e}gories $D(QCoh(U,u^*(\alpha)))$ avec $U$ affine. 

\medskip
L'organisation de cet article est comme suit: nous commen\c{c}ons en rappelant les d\'{e}finitions
et quelques propri\'{e}t\'{e}s des faisceaux quasi-coh\'{e}rents sur $X$ tordus par $\alpha$ (voir \cite{AC1}). Si $f:Y\longrightarrow X$ est un morphisme des sch\'{e}mas et $\alpha
\in \check{C}^2(X,\mathcal O_X^*)$ est un cocycle de C\v{e}ch, nous consid\'{e}rons le foncteur
d'image directe: 
\begin{equation}
f_*:QCoh(Y,f^*(\alpha))\longrightarrow QCoh(X,\alpha)
\end{equation} On sait que $D(QCoh(X,\alpha))$ est une cat\'{e}gorie ab\'{e}lienne de Grothendieck
 (voir Antieau \cite{Anti}) et donc chaque objet dans $D(Qcoh(X,\alpha))$ dispose d'une r\'{e}solution
 $K$-injective (voir Section 2). De m\^{e}me, chaque objet de $D(QCoh(Y,f^*(\alpha)))$ dispose 
 d'une r\'{e}solution $K$-injective. Il s'ensuit qu'on a un foncteur d\'{e}riv\'{e}:
 \begin{equation}
 R_{qc}f_*:D(QCoh(Y,f^*(\alpha)))\longrightarrow D(QCoh(X,\alpha))
 \end{equation} De plus, pour une immersion ouverte $u:U\longrightarrow X$ avec $U$ affine, nous
 montrons que le foncteur d'image directe $u_*:QCoh(U,u^*(\alpha))\longrightarrow QCoh(X,\alpha)$ est 
 exact. Alors, on a un foncteur $u_*:D(QCoh(U,u^*(\alpha)))\longrightarrow D(QCoh(X,\alpha))$ au niveau des cat\'{e}gories d\'{e}riv\'{e}es. Nous consid\'{e}rons maintenant  la plus petite 
 sous-cat\'{e}gorie triangul\'{e}e $\Delta(X,\alpha)$ de $D(QCoh(X,\alpha))$ contenant tous les objets
$u_*\mathcal F^\bullet$, o\`{u} $u:U\longrightarrow X$ est une immersion ouverte avec $U$ affine et
$\mathcal F^\bullet\in D(QCoh(U,u^*(\alpha)))$. Alors, on montre le r\'{e}sultat de Th\'{e}or\`{e}me \ref{Th1} par induction sur $n(X)$, le nombre minimal des affines \`{a} s\'{e}lectionner pour former un
recouvrement de $X$.

\medskip

\medskip

\section{La cat\'{e}gorie d\'{e}riv\'{e}e des faisceaux tordus}

\medskip

\medskip
Soit $k$ un anneau commutatif et soit $(X,\mathcal O_X)$ un sch\'{e}ma quasi-compact et s\'{e}par\'{e} sur 
$Spec(k)$. Nous notons par $ZarAff(X)$ la cat\'{e}gorie des immersions ouvertes
$j:U\longrightarrow X$ avec $U$ affine. Soit $\alpha\in  \check{C}^2(X,\mathcal O_X^*)$ un cocycle de C\v{e}ch. Si $\mathcal U=\{U_i\}_{i\in I}$ est un recouvrement de $X$ tel qu'on peut 
pr\'{e}senter $\alpha$ comme une famille $\alpha=\{\alpha_{ijk}\in \Gamma(U_i\cap U_j\cap U_k,\mathcal O_X^*)\}_{i,j,k\in I}$, on dit que $\alpha\in \check{C}^2(X,\mathcal O_X^*,\mathcal U)$. Rappelons
qu'un faisceau tordu par $\alpha$ est d\'{e}fini comme suit (voir, par exemple, \cite[Definition 1.2.1]{AC1}).

\medskip 
\begin{defn} Soit $\mathcal U=\{U_i\}_{i\in I}$ un recouvrement ouvert de $X$. 
Soit $\alpha\in \check{C}^2(X,\mathcal O_X^*)$ un cocycle de C\v{e}ch correspondant \`{a} 
une famille $\{\alpha_{ijk}\in \Gamma(U_i\cap U_j\cap U_k,\mathcal O_X^*)\}_{i,j,k\in I}$. Alors, un
faisceau $\mathcal F$ sur $X$ tordu par $\alpha$ est un couple $(\{\mathcal F_i\}_{i\in I},
\{\varphi_{ij}\}_{i,j\in I})$ v\'{e}rifiant les conditions suivantes:

\medskip
(1) Pour chaque $i\in I$, $\mathcal F_i$ est un faisceau des $\mathcal O_{U_i}$-modules
sur $U_i$. 

\medskip
(2) Pour $i,j\in I$, $\varphi_{ij}:\mathcal F_j|U_i\cap U_j\longrightarrow \mathcal F_i|U_i\cap U_j$
est un isomorphisme tel que:

\medskip
\hspace{0.1in} (a) Pour chaque $i\in I$, on a $\varphi_{ii}=1$. 

\medskip
\hspace{0.1in} (b) Pour $i,j\in I$, on a $\varphi_{ij}=\varphi_{ji}^{-1}$. 

\medskip
\hspace{0.1in} (c) Pour $i,j,k\in I$, l'isomorphisme $\varphi_{ij}\circ \varphi_{jk}\circ 
\varphi_{ik}^{-1}:\mathcal F_i|U_i\cap U_j\cap U_k\longrightarrow \mathcal 
F_i|U_i\cap U_j\cap U_k$ correspond \`{a} la 
multiplication $\alpha_{ijk}\bullet \_:
\mathcal F_i|U_i\cap U_j\cap U_k\longrightarrow \mathcal 
F_i|U_i\cap U_j\cap U_k$  sur $\mathcal F_i|U_i\cap U_j\cap U_k$. 

\medskip
De plus, on dit que $\mathcal F$ est quasi-coh\'{e}rent si, pour tout $i\in I$, 
$\mathcal F_i$ est un faisceau quasi-coh\'{e}rent sur $U_i$. Nous notons par
$QCoh(X,\alpha,\mathcal U)$ la cat\'{e}gorie des faisceaux quasi-coh\'{e}rents
tordus par $\alpha$. Si $\alpha$ est trivial, nous notons par $QCoh(X)$ la cat\'{e}gorie des
faisceaux quasi-coh\'{e}rents sur $X$. 
\end{defn}

\medskip
On sait que la cat\'{e}gorie $QCoh(X,\alpha,\mathcal U)$ ne d\'{e}pend pas du
choix du recouvrement $\mathcal U$. Plus pr\'{e}cis\'{e}ment, si on peut pr\'{e}senter ce cocycle $\alpha\in \check{C}^2(X,\mathcal O_X^*)$ comme une autre famille $\alpha=\{\alpha_{ijk}'\in \Gamma(U'_{i'}\cap U'_{j'}\cap U'_{k'},\mathcal O_X^*)\}_{i',j',k'\in I'}$ vis-\`{a}-vis d'un autre recouvrement $\mathcal U'=\{U_{i'}'\}_{i'\in I'}$ de $X$, on a une \'{e}quivalence $QCoh(X,\alpha,\mathcal U)\cong 
QCoh(X,\alpha,\mathcal U')$ des cat\'{e}gories (voir \cite[$\S$ 1]{AC1}). Alors, on va noter par $QCoh(X,\alpha)$ la cat\'{e}gorie
des faisceaux quasi-coh\'{e}rents tordus par $\alpha$. De plus, pour cocycles $\alpha$, $\alpha'
\in \check{C}^2(X,\mathcal O_X^*)$ tels que $\alpha=\alpha'$ dans $\check{H}^2(X,\mathcal O_X^*)$, les cat\'{e}gories $QCoh(X,\alpha)$ et $QCoh(X,\alpha')$ sont \'{e}quivalentes. 

\medskip
Nous rappelons maintenant quelques propri\'{e}t\'{e}s des faisceaux tordus sur un sch\'{e}ma $X$. Pour en savoir plus sur ces r\'{e}sultats, voir, par exemple, \cite[$\S$ 1]{AC1}. 

\medskip
(P1) Soit $\alpha\in \check{C}^2(X,\mathcal O_X^*)$ un cocycle de C\v{e}ch et soient $\mathcal U=\{U_i\}_{i\in I}$,
$\mathcal U'=\{U'_{i'}\}_{i'\in I'}$ deux recouvrements de $X$. Supposons  qu'on peut pr\'{e}senter ce
cocycle $\alpha$ comme une famille $\alpha=\{\alpha_{ijk}\in \Gamma(U_{i}\cap U_{j}\cap U_{k},\mathcal O_X^*)\}_{i,j,k\in I}$ vis-\`{a}-vis du $\mathcal U$ et comme $\alpha=\{\alpha'_{i'j'k'}\in \Gamma(U'_{i'}\cap U'_{j'}\cap U'_{k'},\mathcal O_X^*)\}_{i',j',k'\in I'}$ vis-\`{a}-vis du $\mathcal U'$ . Alors, si $\mathcal F=(\{\mathcal F_i\}_{i\in I},
\{\varphi_{ij}\}_{i,j\in I})$ est un objet de $QCoh(X,\alpha,\mathcal U)$, on peut pr\'{e}senter
$\mathcal F$ comme un objet de $QCoh(X,\alpha,\mathcal U')$. Autrement dit, il existe un couple
$(\{\mathcal F'_{i'}\}_{i'\in I'},
\{\varphi_{i'j'}\}_{i',j'\in I'})\in QCoh(X,\alpha,\mathcal U')$ corr\'{e}spondant \`{a} $\mathcal F$. 

\medskip
(P2) Soit $f:X\longrightarrow Y$ un morphisme des sch\'{e}mas et soit $\beta\in \check{C}^2(Y,
\mathcal O_Y^*)$ un cocycle de C\v{e}ch. Alors, si $\mathcal G$ est un faisceau quasi-coh\'{e}rent sur $Y$
tordu par $\beta$, $f^*\mathcal G$ est un faisceau quasi-coh\'{e}rent sur $X$ tordu par
$f^*(\beta)\in \check{C}^2(X,\mathcal O_X^*)$. 

\medskip
(P3) Soit $f:X\longrightarrow Y$ un morphisme quasi-compact et s\'{e}par\'{e} des sch\'{e}mas et
soit $\beta\in \check{C}^2(Y,
\mathcal O_Y^*)$ un cocycle de C\v{e}ch. Alors, si $\mathcal F$ est un faisceau quasi-coh\'{e}rent sur $X$ tordu par $f^*(\beta)\in \check{C}^2(X,\mathcal O_X^*)$, $f_*\mathcal F$ est un faisceau
quasi-coh\'{e}rent sur $Y$ tordu par $\beta$. 

\medskip
(P4) Soit $f:X\longrightarrow Y$ un morphisme quasi-compact et s\'{e}par\'{e} des sch\'{e}mas et
soit $\beta\in \check{C}^2(Y,
\mathcal O_Y^*)$ un cocycle de C\v{e}ch.  Alors, $(f^*,f_*)$ est un couple des foncteurs adjoints
entre les cat\'{e}gories $QCoh(Y,\beta)$ et $QCoh(X,f^*(\beta))$. 

\medskip

\begin{lem}\label{lem1}  Soient $f:Y\longrightarrow X$, $f':Y'\longrightarrow X'$ morphismes s\'{e}par\'{e}s et
quasi-compacts des sch\'{e}mas. Soit $\alpha\in \check{C}^2(X,\mathcal O_X^*)$ un cocycle de C\v{e}ch 
et soit $\mathcal F$ un faisceau quasi-coh\'{e}rent sur $Y$ tordu par $f^*(\alpha)$. \'{E}tant donn\'{e}
un carr\'{e} cart\'{e}sien des sch\'{e}mas
\begin{equation}
\begin{CD}
Y' @>f'>> X' \\
@VhVV @VgVV \\
Y  @>f>> X \\
\end{CD}
\end{equation} tel que $g$, $h$ sont des morphismes plats, on a un isomorphisme $g^*f_*\mathcal
F\cong f'_*h^*\mathcal F$ dans $QCoh(X',g^*(\alpha))$. 
\end{lem}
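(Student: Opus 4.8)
The plan is to reduce the twisted statement to the classical flat base change theorem for quasi-coherent sheaves by working locally on a suitable open cover. First I would fix a cover $\mathcal U=\{U_i\}_{i\in I}$ of $X$ by opens relative to which $\alpha$ is presented as a family $\{\alpha_{ijk}\}$, and set $V_i=f^{-1}(U_i)$, so that $\mathcal F=(\{\mathcal F_i\}, \{\varphi_{ij}\})$ lives on the cover $\{V_i\}$ of $Y$ with respect to $f^*(\alpha)$. Pulling back along $g$, the opens $U_i'=g^{-1}(U_i)$ cover $X'$ and present $g^*(\alpha)$; likewise $V_i'=h^{-1}(V_i)=f'^{-1}(U_i')$ cover $Y'$. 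By the construction of $f_*$ on twisted sheaves (property (P3)), the sheaf $f_*\mathcal F$ is described on $\{U_i\}$ by the family $\{(f|_{V_i})_*\mathcal F_i\}$ together with the glueing isomorphisms induced by the $\varphi_{ij}$; similarly $f'_*h^*\mathcal F$ is described on $\{U_i'\}$ by $\{(f'|_{V_i'})_*(h|_{V_i'})^*\mathcal F_i\}$ with the glueing induced by $h^*\varphi_{ij}$.

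The heart of the argument is then componentwise. For each $i$ the square
\begin{equation}
\begin{CD}
V_i' @>f'|_{V_i'}>> U_i' \\
@Vh|_{V_i'}VV @Vg|_{U_i'}VV \\
V_i @>f|_{V_i}>> U_i \\
\end{CD}
\end{equation}
is again cartesian with vertical maps flat, and $f|_{V_i}$ is separated and quasi-compact; so the \emph{classical} flat base change isomorphism gives a canonical isomorphism
\begin{equation}
\theta_i:(g|_{U_i'})^*(f|_{V_i})_*\mathcal F_i \xrightarrow{\ \sim\ } (f'|_{V_i'})_*(h|_{V_i'})^*\mathcal F_i
\end{equation}
of quasi-coherent sheaves on $U_i'$. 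I would take the $\theta_i$ as the candidate components of the desired isomorphism $g^*f_*\mathcal F\cong f'_*h^*\mathcal F$ in $QCoh(X',g^*(\alpha))$. What remains is to check that the $\theta_i$ are compatible with the glueing data on triple overlaps, i.e.\ that on $U_i'\cap U_j'$ the square relating $\theta_i$, $\theta_j$ and the transition isomorphisms of $g^*f_*\mathcal F$ and of $f'_*h^*\mathcal F$ commutes. This is where the only real work lies: one must trace the transition isomorphism of $f_*\mathcal F$ (built from $\varphi_{ij}$ via the adjunction/pushforward) through $g^*$ and through $\theta$, and compare with the transition isomorphism of $f'_*h^*\mathcal F$ (built from $h^*\varphi_{ij}$). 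Both sides are ultimately canonical arrows assembled from unit/counit maps of the $(g^*,g_*)$, $(h^*,h_*)$, $(f^*,f_*)$, $(f'^*,f'_*)$ adjunctions over the cartesian square, so the compatibility follows from the naturality of the classical base-change isomorphism together with the cocycle identity satisfied by the $\varphi_{ij}$ (condition (2)(c) in the definition); the factors $\alpha_{ijk}$ appear identically on both sides and cancel.

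The main obstacle I anticipate is purely bookkeeping: organizing the diagram chase so that the base-change isomorphisms $\theta_i$ are manifestly natural in $i$ with respect to restriction to overlaps, so that their compatibility with the $\varphi_{ij}$ is a formal consequence of the classical statement rather than a fresh computation. A clean way to package this is to invoke the naturality of flat base change with respect to the open immersions $U_i'\cap U_j'\hookrightarrow U_i'$ and $U_i'\cap U_j'\hookrightarrow U_j'$ (which are themselves flat, so the relevant base-change squares nest), reducing everything to the uniqueness of the canonical base-change map. I do not expect any difficulty from non-noetherianity, since the classical flat base change theorem for a quasi-compact separated morphism holds without noetherian hypotheses, nor from the choice of cover, since by property (P1) the resulting twisted sheaf is independent of that choice up to canonical isomorphism.
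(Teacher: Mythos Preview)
Your approach is essentially identical to the paper's: choose a cover $\{U_i\}$ of $X$ presenting $\alpha$, pull it back to $Y$, $X'$, $Y'$, and apply the classical flat base change isomorphism componentwise to obtain the isomorphism of the underlying data of the twisted sheaves. The paper is terser on the compatibility of the $\theta_i$ with the transition maps $\varphi_{ij}$, simply invoking that $g_i^*f_{i*}\cong f'_{i*}h_i^*$ is a natural isomorphism of functors (so applies equally to the morphisms $\varphi_{ij}$), whereas you spell out that this is the point requiring care; but the argument is the same.
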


\begin{proof} Prenons un recouvrement $\mathcal U=\{U_i\}_{i\in I}$ de $X$ tel qu'on peut
pr\'{e}senter $\alpha$ comme une famille $\alpha=\{\alpha_{ijk}\in \Gamma(U_i\cap U_j\cap U_k,
\mathcal O_X^*)\}_{i,j,k\in I}$.  Posons:
\begin{equation}
V_i:=Y\times_XU_i \qquad V'_i:=Y'\times_XU_i \qquad U'_i:=X'\times_XU_i \qquad \forall\textrm{ }
i\in I
\end{equation} Alors, on peut pr\'{e}senter $f^*(\alpha)$ comme une famille $f^*(\alpha)=\{f^*(\alpha_{ijk})\in \Gamma(V_i\cap V_j\cap V_k,
\mathcal O_Y^*)\}_{i,j,k\in I}$ vis-\`{a}-vis du recouvrement $\mathcal V=\{V_i\}_{i\in I}$ de $Y$. Par d\'{e}finition, $\mathcal F$ est un objet de $QCoh(Y,f^*(\alpha))$. Donc, on peut \'{e}crire 
$\mathcal F$ comme un objet $\mathcal F=(\{\mathcal F_i\}_{i\in I},
\{\varphi_{ij}\}_{i,j\in I})$ de $QCoh(Y,f^*(\alpha),\mathcal V)$. De plus, pour $i,j\in I$, on a des carr\'{e}s
cart\'{e}siens:
\begin{equation}
\begin{array}{cc}
\begin{CD}
V_i' @>f'_i>> U'_i \\
@Vh_iVV @Vg_iVV \\
V_i @>f_i>> U_i \\
\end{CD} \quad \qquad & 
\begin{CD}
V_i'\cap V'_j @>f'_{ij}>> U'_i\cap U'_j \\
@Vh_{ij}VV @Vg_{ij}VV \\
V_i\cap V_j @>f_{ij}>> U_{i}\cap U_j \\
\end{CD}
\end{array}
\end{equation} Il est clair que $f_i$, $f'_i$ sont des morphismes quasi-compacts et s\'{e}par\'{e}s et
$g_i$, $h_i$ sont des morphismes plats. Il s'ensuit que, pour chaque $i\in I$, on a un isomorphisme naturel des foncteurs:
\begin{equation}\label{2.4x}
g_i^*f_{i*}\cong f'_{i*}h_i^*:QCoh(V_i)\longrightarrow QCoh(U'_i)
\end{equation} On voit que le cocycle $g^*(\alpha)$ est un \'{e}l\'{e}ment de $\check{C}^2(X',\mathcal O_{X'}^*,\{U_i'\}_{i\in I})$. Nous consid\'{e}rons maintenant les couples $g^*f_{*}\mathcal F=(\{g_i^*f_{i*}\mathcal F_i\}_{i\in I},
\{g_{ij}^*f_{ij*}\varphi_{ij}\}_{i,j\in I})$ et $f'_*h^*\mathcal F=(\{f'_{i*}h_i^*\mathcal F_i\}_{i\in I},
\{f'_{ij*}h_{ij}^*\varphi_{ij}\}_{i,j\in I})$. Alors,  il r\'{e}sulte de \eqref{2.4x} qu'on a un isomorphisme 
\begin{equation}
g^*f_{*}\mathcal F=(\{g_i^*f_{i*}\mathcal F_i\}_{i\in I},
\{g_{ij}^*f_{ij*}\varphi_{ij}\}_{i,j\in I})\cong (\{f'_{i*}h_i^*\mathcal F_i\}_{i\in I},
\{f'_{ij*}h_{ij}^*\varphi_{ij}\}_{i,j\in I})=f'_*h^*\mathcal F
\end{equation} dans la cat\'{e}gorie $QCoh(X',g^*(\alpha))\cong QCoh(X',g^*(\alpha),\{U'_i\}_{i\in I})$. 

\end{proof}

\medskip
\begin{lem}\label{lem2} Soit $X$ un sch\'{e}ma quasi-compact et s\'{e}par\'{e}. Soit $u:U\longrightarrow X$ une immersion ouverte avec $U$ affine et soit
$\alpha\in \check{C}^2(X,\mathcal O_X^*)$ un cocycle de C\v{e}ch. Alors, le foncteur
$u_*:QCoh(U,u^*(\alpha))\longrightarrow QCoh(X,\alpha)$ est exact. 
\end{lem}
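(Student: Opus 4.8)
The plan is to reduce the assertion to the classical fact that the direct image along an affine morphism of schemes is exact on quasi-coherent sheaves, the $\alpha$-twisting entering only as bookkeeping on components. First I would fix an open covering $\mathcal U=\{U_i\}_{i\in I}$ of $X$ by affine opens with respect to which $\alpha$ is presented as a family $\alpha=\{\alpha_{ijk}\in\Gamma(U_i\cap U_j\cap U_k,\mathcal O_X^*)\}_{i,j,k\in I}$, and I would set $V_i:=U\cap U_i=U\times_X U_i$ for each $i\in I$. Since $U$ and each $U_i$ are affine and $X$ is separated, each $V_i$ is affine; hence the induced map $u_i:V_i\longrightarrow U_i$ is a morphism between affine schemes, so it is an affine morphism, and for the same reason $u:U\longrightarrow X$ is affine, in particular quasi-compact and separated, so that $u_*:QCoh(U,u^*(\alpha))\longrightarrow QCoh(X,\alpha)$ is defined by (P3). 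By (P1) the given $\mathcal F$ may be presented, relative to the covering $\mathcal V=\{V_i\}_{i\in I}$ of $U$, as an object $\mathcal F=(\{\mathcal F_i\}_{i\in I},\{\varphi_{ij}\}_{i,j\in I})$ of $QCoh(U,u^*(\alpha),\mathcal V)$, where $u^*(\alpha)$ is presented by the family $\{\alpha_{ijk}|_{V_i\cap V_j\cap V_k}\}_{i,j,k\in I}$.

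The next step is to record the componentwise description of $u_*\mathcal F$. Following the construction of the direct image of a twisted sheaf recalled in this section --- this is exactly the computation carried out in the proof of Lemme \ref{lem1} --- the object $u_*\mathcal F$ is presented, relative to the covering $\mathcal U$ of $X$, by the family
\[
u_*\mathcal F=\bigl(\{u_{i*}\mathcal F_i\}_{i\in I},\{u_{ij*}\varphi_{ij}\}_{i,j\in I}\bigr),
\]
where $u_{ij}:V_i\cap V_j\longrightarrow U_i\cap U_j$ denotes the restriction of $u$; equivalently, applying Lemme \ref{lem1} to the cartesian square
\[
\begin{CD}
V_i @>u_i>> U_i \\
@VVV @VVV \\
U @>u>> X
\end{CD}
\]
whose vertical arrows are open immersions, hence flat, yields $(u_*\mathcal F)|_{U_i}\cong u_{i*}\mathcal F_i$, compatibly with the gluing data. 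Thus the $i$-th component of $u_*\mathcal F$ is $u_{i*}$ applied to the $i$-th component of $\mathcal F$.

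Finally I would conclude using that exactness in a category of twisted sheaves is tested componentwise: in $QCoh(U,u^*(\alpha),\mathcal V)$ (and likewise in $QCoh(X,\alpha,\mathcal U)$) kernels and cokernels are formed componentwise, the gluing morphisms $\varphi_{ij}$ being isomorphisms, so that a short exact sequence $0\to\mathcal F'\to\mathcal F\to\mathcal F''\to 0$ of $\alpha$-twisted quasi-coherent sheaves on $U$ amounts to a compatible family of short exact sequences $0\to\mathcal F'_i\to\mathcal F_i\to\mathcal F''_i\to 0$ in $QCoh(V_i)$. Since each $u_i:V_i\longrightarrow U_i$ is affine, the functor $u_{i*}:QCoh(V_i)\longrightarrow QCoh(U_i)$ is exact, whence $0\to u_{i*}\mathcal F'_i\to u_{i*}\mathcal F_i\to u_{i*}\mathcal F''_i\to 0$ is exact for every $i\in I$; by the previous step this is precisely the family of components of $0\to u_*\mathcal F'\to u_*\mathcal F\to u_*\mathcal F''\to 0$, which is therefore exact in $QCoh(X,\alpha)$, and $u_*$ is exact. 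I do not expect a genuine obstacle here: the only points requiring care are the observation that each $V_i$ is affine --- this is where the separatedness of $X$ is used --- and the componentwise identification of $u_*\mathcal F$ via flat base change.
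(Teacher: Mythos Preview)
Your proof is correct, but it is organized differently from the paper's. The paper first observes that $u_*$ is left exact because $(u^*,u_*)$ is an adjoint pair, and then spends the rest of the argument showing that $u_*$ preserves (finite) colimits: it takes an inductive system $\{\mathcal F_i\}$, writes everything componentwise over a covering $\{V_j\}$ of $X$, and checks by hand on affine opens $W''\subseteq V_j$ that $\operatorname{colim}_i u_{j*}\mathcal F_{ij}\cong u_{j*}\operatorname{colim}_i\mathcal F_{ij}$, using that sections of quasi-coherent sheaves over affines commute with colimits and that $U\times_X W''$ is again affine by separatedness.

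You bypass this detour: having identified $(u_*\mathcal F)|_{U_i}\cong u_{i*}\mathcal F_i$ via flat base change (Lemme~\ref{lem1}), you simply invoke the classical fact that pushforward along the affine morphism $u_i:V_i\to U_i$ is exact on quasi-coherent sheaves, and conclude by the componentwise nature of exactness in a twisted-sheaf category. This is shorter and more transparent; the paper's route has the incidental benefit of showing that $u_*$ in fact preserves \emph{all} colimits, not just cokernels, though that stronger statement is not used elsewhere. One small remark: you take the covering $\{U_i\}$ to consist of affines, whereas the paper does not impose this; your choice is harmless since any covering presenting $\alpha$ can be refined to an affine one, and indeed it is what makes each $V_i$ affine without further argument.
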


\begin{proof} Puisque $X$ est s\'{e}par\'{e} et $U$ est affine, $u:U\longrightarrow X$ est un morphisme affine et donc un morphisme quasi-compact. De plus, le morphisme $u:U\longrightarrow X$ \'{e}tant une immersion ouverte, $u$ est s\'{e}par\'{e}. Alors, en cons\'{e}quence du fait
que $(u^*,u_*)$ est un couple des foncteurs adjoints, il s'ensuit que $u_*$ pr\'{e}serve les limites
finies. Donc, il reste \`{a} montrer que $u_*$ pr\'{e}serve les colimites finies. 

\medskip
Nous consid\'{e}rons maintenant un syst\`{e}me inductif $\{\mathcal F_i\}_{i\in I}$ dans
$QCoh(U,u^*(\alpha))$. Choisissons un recouvrement $\mathcal V=\{V_j\}_{j\in J}$ de $X$ tel que 
$\alpha\in \check{C}^2(X,\mathcal O_X^*,\mathcal V)$. Alors, en posant $\mathcal U=\{U_j:=U\times_XV_j\}_{j\in J}$, on a 
$u^*(\alpha)\in \check{C}^2(U,\mathcal O_U^*,\mathcal U)$. Alors, pour chaque $i\in I$, on peut \'{e}crire
$\mathcal F_i$ comme un objet de $QCoh(U,u^*(\alpha),\mathcal U)$: \begin{equation}
\mathcal F_i=(\{\mathcal F_{ij}\}_{j\in J},\{\varphi_{ijj'}\}_{j,j'\in J})
\end{equation} Posons $\mathcal F:=colim_{i\in I}\mathcal F_i$. Alors, on peut pr\'{e}senter
$\mathcal F$ comme un couple $\mathcal F=(\{\mathcal F_j\}_{j\in J},\{\varphi_{jj'}\}_{j,j'\in J})$ o\`{u} $\mathcal F_j\in QCoh(U\times_XV_j)$ est le faisceau associ\'{e} au pr\'{e}faisceau
\begin{equation}\label{2.7}
W\mapsto colim_{i\in I}\mathcal F_{ij}(W) \qquad \forall\textrm{ }W\in ZarAff(U\times_XV_j)
\end{equation} De plus, on sait que $\mathcal F_{ij}$ est un faisceau quasi-coh\'{e}rent 
sur $U\times_XV_j$ pour chaque $i\in I$. \'{E}tant donn\'{e} un morphism $W'\longrightarrow W$
dans $ZarAff(U\times_XV_j)$, on a:
\begin{equation}\label{2.8}
colim_{i\in I}\mathcal F_{ij}(W')=colim_{i\in I}(\mathcal F_{ij}(W)\otimes_{\mathcal O_X(W)}
\mathcal O_X(W'))=(colim_{i\in I}(\mathcal F_{ij}(W)))\otimes_{\mathcal O_X(W)}
\mathcal O_X(W')
\end{equation} Il r\'{e}sulte de \eqref{2.8} que le pr\'{e}faisceau dans \eqref{2.7} est un faisceau quasi-coh\'{e}rent sur $U_j=U\times_XV_j$. Autrement dit, on a 
\begin{equation}
\mathcal F_j(W)=colim_{i\in I}\mathcal F_{ij}(W)\qquad \forall\textrm{ }W\in ZarAff(U\times_XV_j)
\end{equation} De m\^{e}me, pour chaque $W''\in ZarAff(V_j)$, on a 
\begin{equation}\label{2.10}
(colim_{i\in I}u_{j*}\mathcal F_{ij})(W'')=colim_{i\in I}((u_{j*}\mathcal F_{ij})(W''))
\end{equation} o\`{u} $u_j:U_j=U\times_XV_j\longrightarrow V_j$ est le morphisme induit par
$u:U\longrightarrow X$.  Puisque $X$ est s\'{e}par\'{e} et $U$ est affine, $U\times_XW''=
U_j\times_{V_j}W''\in ZarAff(U\times_XV_j)$. Alors, on a 
\begin{equation}\label{2.11}
colim_{i\in I}((u_{j*}\mathcal F_{ij})(W''))=colim_{i\in I}\mathcal F_{ij}(U_j\times_{V_j}W'')=\mathcal F_j(U_j\times_{V_j}W'')=(u_{j*}\mathcal F_j)(W'')
\end{equation} pour chaque $W\in ZarAff(V_j)$. En combinant \eqref{2.10} et \eqref{2.11}, on a un isomorphisme naturel
\begin{equation}\label{2.12}
(colim_{i\in I}u_{j*}\mathcal F_{ij})\cong u_{j*}\mathcal F_j \in QCoh(V_j)
\end{equation} Pour $j,j'\in J$, notons par $u_{jj'}:U_j\cap U_{j'}\longrightarrow V_j\cap V_{j'}$ les
morphismes induits. Puisque $colim_{i\in I}u_*\mathcal F_i\in QCoh(X,\alpha,\mathcal V)$ est le couple 
$(\{(colim_{i\in I}u_{j*}\mathcal F_{ij})\}_{j\in J},\{colim_{i\in I}(u_{jj'*}\varphi_{ijj'})\}_{j,j'\in J})$ et
$u_*\mathcal F\in QCoh(X,\alpha,\mathcal V)$ est le couple $(\{u_{j*}\mathcal F_j\}_{j\in J},\{u_{jj'*}
\varphi_{jj'}\}_{j,j'\in J})$, il s'ensuit que 
\begin{equation}
colim_{i\in I}u_*\mathcal F_i= u_*\mathcal F=u_*(colim_{i\in I}\mathcal F_i)
\end{equation}

\end{proof}

\medskip
Par hypoth\`{e}se, $X$ est un sch\'{e}ma quasi-compact et s\'{e}par\'{e}. Consid\'{e}rons une immersion ouverte $u:U\longrightarrow X$ avec $U$ affine. Alors, $U$ est \'{e}galement un sch\'{e}ma
quasi-compact et s\'{e}par\'{e}. Soit $\alpha\in \check{C}^2(X,\mathcal O_X^*)$ un cocycle de
C\v{e}ch. On sait que la cat\'{e}gorie $QCoh(X,\alpha)$ des faisceaux quasi-coh\'{e}rents sur
$X$ tordu par $\alpha$ est une cat\'{e}gorie ab\'{e}lienne de Grothendieck (voir \cite[Proposition 3.2]{Anti}). Pour savoir plus sur les cat\'{e}gories de Grothendieck, voir, par exemple, \cite[$\S$ 8]{KS}.

\medskip Plus g\'{e}n\'{e}ralement, si $\mathcal A$ est une cat\'{e}gorie ab\'{e}lienne, notons par $K(\mathcal A)$ la cat\'{e}gorie suivante: les objets de $K(\mathcal A)$ sont
des complexes de cha\^{i}nes sur $\mathcal A$ et pour $\mathcal F^\bullet$, $\mathcal G^\bullet\in K(\mathcal A)$, $Hom_{K(\mathcal A)}(\mathcal F^\bullet,\mathcal G^\bullet)$ est l'ensemble des classes d'homotopie des morphismes de $\mathcal F^\bullet$ vers $\mathcal G^\bullet$. Donc, la cat\'{e}gorie d\'{e}riv\'{e}e $D(\mathcal A)$ est obtenue en inversant les quasi-isomorphismes dans $K(\mathcal A)$. Rappelons qu'un objet $\mathcal I^\bullet\in K(A)$  est dit
$K$-injectif si, pour chaque complexe acyclique $\mathcal H^\bullet$ dans $K(\mathcal A)$, 
$Hom^\bullet_{K(\mathcal A)}(\mathcal H^\bullet,\mathcal I^\bullet)$ est acyclique (voir \cite[1.5]{Sp}). En particulier,
$QCoh(X,\alpha)$ \'{e}tant une cat\'{e}gorie de Grothendieck, chaque objet $\mathcal F$ de
$QCoh(X,\alpha)$ dispose d'une r\'{e}solution $K$-injective (voir \cite[Theorem 5.4]{Tarrio}). De m\^{e}me, chaque objet de $QCoh(U,u^*(\alpha))$ dispose d'une r\'{e}solution $K$-injective. Il s'ensuit qu'on a un foncteur d\'{e}riv\'{e}:
\begin{equation}\label{2.14}
R_{qc}u_*:D(QCoh(U,u^*(\alpha)))\longrightarrow D(QCoh(X,\alpha))
\end{equation} De plus, en cons\'{e}quence du Lemme \ref{lem2}, on sait que $u_*:QCoh(U,u^*(\alpha))\longrightarrow QCoh(X,\alpha)$ est un foncteur exact. Alors, on peut
remplacer \eqref{2.14} par un foncteur:
\begin{equation}
u_*:D(QCoh(U,u^*(\alpha)))\longrightarrow D(QCoh(X,\alpha))
\end{equation}

\medskip Nous \'{e}tablissons maintenant le r\'{e}sultat principal de cet article. Soit $\Delta(X,\alpha)$ la
plus petite sous-cat\'{e}gorie triangul\'{e}e de $D(QCoh(X,\alpha))$ contenant tous les objets
$u_*\mathcal F^\bullet$, o\`{u} $u:U\longrightarrow X$ est une immersion ouverte avec $U$ affine et
$\mathcal F^\bullet\in D(QCoh(U,u^*(\alpha)))$. Nous montrons que $\Delta(X,\alpha)=D(QCoh(X,\alpha))$.

\medskip
\begin{thm} Soit $X$ un sch\'{e}ma quasi-compact et s\'{e}par\'{e} et soit $\alpha\in \check{C}^2(X,
\mathcal O_X^*)$ un cocycle de C\v{e}ch. Soit $\Delta(X,\alpha)$ la
plus petite sous-cat\'{e}gorie triangul\'{e}e de $D(QCoh(X,\alpha))$ contenant tous les objets
$u_*\mathcal F^\bullet$, o\`{u} $u:U\longrightarrow X$ est une immersion ouverte avec $U$ affine et
$\mathcal F^\bullet\in D(QCoh(U,u^*(\alpha)))$. Alors,  $\Delta(X,\alpha)=D(QCoh(X,\alpha))$. 
\end{thm}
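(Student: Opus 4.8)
The plan is to induct on $n(X)$, the minimal number of affine opens needed to cover $X$. For the base case $n(X)=1$, the scheme $X$ is itself affine, so the identity $\mathrm{id}:X\to X$ is an open immersion with affine source, hence every object of $D(QCoh(X,\alpha))$ is already of the form $u_*\mathcal F^\bullet$ with $u$ the identity and $\mathcal F^\bullet\in D(QCoh(X,\alpha))$; thus $\Delta(X,\alpha)=D(QCoh(X,\alpha))$ trivially. For the inductive step, suppose $n(X)=n\geq 2$ and fix a cover $X=U\cup V$ where $U$ is affine (so $n(U)=1$) and $V$ is a union of $n-1$ affines (so $n(V)\le n-1$); by the induction hypothesis the theorem holds for both $U$ and $V$, and also for $U\cap V$ since $U$ being affine and $X$ separated forces $U\cap V$ to be covered by at most $n-1$ affines (each $U\cap V_i$ is affine). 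Denote the open immersions $j:U\to X$, $k:V\to X$, and $\ell:U\cap V\to X$, with $\alpha$ restricting along each of them.

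The key step is a Mayer–Vietoris triangle in $D(QCoh(X,\alpha))$. For an object $\mathcal G^\bullet\in D(QCoh(X,\alpha))$, I would like to establish a distinguished triangle
\begin{equation}
\mathcal G^\bullet \longrightarrow Rj_*(j^*\mathcal G^\bullet)\oplus Rk_*(k^*\mathcal G^\bullet)\longrightarrow R\ell_*(\ell^*\mathcal G^\bullet)\longrightarrow \mathcal G^\bullet[1].
\end{equation}
To get this, I would first pass to a $K$-injective resolution $\mathcal G^\bullet\to \mathcal I^\bullet$ in $QCoh(X,\alpha)$, which exists since the category is Grothendieck. Working with a common cover $\mathcal V=\{V_i\}$ realizing $\alpha$, a twisted sheaf is a compatible family of honest quasi-coherent sheaves on the $V_i$'s, so the restriction and pushforward functors $j^*,j_*$ etc. act componentwise; the ordinary (untwisted) Mayer–Vietoris sequence of sheaves $0\to \mathcal I\to j_*j^*\mathcal I\oplus k_*k^*\mathcal I\to \ell_*\ell^*\mathcal I\to 0$ holds componentwise because $X$ is separated (so all the intersections are affine and the pushforwards are exact, using Lemma \ref{lem2}), and these component sequences are compatible with the gluing isomorphisms $\varphi_{ij}$ by naturality together with Lemma \ref{lem1} applied to the relevant cartesian squares. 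Hence the triangle above descends to $D(QCoh(X,\alpha))$, and since $\mathcal I^\bullet$ is $K$-injective, $Rj_*j^*\mathcal I^\bullet$ is computed by the honest complex $j_*j^*\mathcal I^\bullet$, and likewise for $k$ and $\ell$.

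With the triangle in hand, the conclusion follows formally: $\Delta(X,\alpha)$ is triangulated, so it suffices to show the three outer terms $Rj_*(j^*\mathcal G^\bullet)$, $Rk_*(k^*\mathcal G^\bullet)$, $R\ell_*(\ell^*\mathcal G^\bullet)$ lie in $\Delta(X,\alpha)$. By the induction hypothesis applied to $U$, the object $j^*\mathcal G^\bullet\in D(QCoh(U,j^*\alpha))$ lies in $\Delta(U,j^*\alpha)$, i.e.\ it is built by finitely many cones and shifts from objects $v_*\mathcal H^\bullet$ with $v:W\to U$ an open immersion, $W$ affine, $\mathcal H^\bullet\in D(QCoh(W,v^*j^*\alpha))$. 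Since $R j_*$ is exact as a triangulated functor, $Rj_*(j^*\mathcal G^\bullet)$ is built from the objects $Rj_*v_*\mathcal H^\bullet$; but $j\circ v:W\to X$ is again an open immersion with $W$ affine, and $Rj_*v_* = R(j\circ v)_* = (j\circ v)_*$ by Lemma \ref{lem2}, so each such object is a generator of $\Delta(X,\alpha)$. Hence $Rj_*(j^*\mathcal G^\bullet)\in \Delta(X,\alpha)$, and identically for $k$ (using $n(V)\le n-1$) and for $\ell$ (using $n(U\cap V)\le n-1$). Therefore $\mathcal G^\bullet\in\Delta(X,\alpha)$, completing the induction.

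I expect the main obstacle to be the careful verification that the componentwise Mayer–Vietoris sequences glue, i.e.\ that the gluing data $\varphi_{ij}$ of the $K$-injective resolution are respected by the natural maps $\mathcal I\to j_*j^*\mathcal I\oplus k_*k^*\mathcal I\to \ell_*\ell^*\mathcal I$; this is where separatedness of $X$ and Lemma \ref{lem1} (flat base change for twisted pushforward) are essential, since one must check compatibility over the triple intersections of the cover refining both $\mathcal V$ and $\{U,V\}$. A secondary point requiring care is the identification $n(U\cap V)\le n(X)-1$: it uses that $U\cap V = \bigcup_i (U\cap V_i)$ with each $U\cap V_i$ affine (intersection of an affine with an affine in a separated scheme), so $U\cap V$ is covered by the same number of affines as $V$ is; one should state this bookkeeping explicitly at the start of the inductive step.
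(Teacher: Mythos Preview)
Your overall strategy---induction on $n(X)$ together with a Mayer--Vietoris decomposition---is correct and closely related to the paper's approach, but the packaging differs. The paper does not set up a three-term Mayer--Vietoris triangle. Instead it uses only the open $X'=U_1\cup\cdots\cup U_{n-1}$ (your $V$), forms the single adjunction triangle
\[
\mathcal G^\bullet \longrightarrow \mathcal F^\bullet \longrightarrow R_{qc}p_*(\mathcal F^\bullet|X')\longrightarrow \mathcal G^\bullet[1],
\]
and then checks by a direct local computation (using flat base change, Lemme~\ref{lem1}, on each $U_i$) that the cofibre satisfies $\mathcal G^\bullet\cong u_{n*}(\mathcal G^\bullet|U_n)$, so it lies in $\Delta(X,\alpha)$ immediately. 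This sidesteps the third vertex $R\ell_*\ell^*$ and does not require invoking the induction hypothesis on $U\cap V$; what your approach buys in return is that it invokes only the familiar Mayer--Vietoris machinery rather than an ad hoc local verification.

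One technical point in your plan needs more care than you indicate, and it is not the gluing issue you flag. You want the termwise Mayer--Vietoris short exact sequence on a $K$-injective resolution $\mathcal I^\bullet$ to yield the \emph{derived} triangle, i.e.\ you need $k_*k^*\mathcal I^\bullet\simeq Rk_*k^*\mathcal G^\bullet$ and likewise for $\ell$. But $k^*$ need not preserve $K$-injectivity inside $QCoh$: the usual argument uses that restriction to an open has an exact left adjoint $k_!$, yet $k_!$ does not land in quasi-coherent sheaves, so in $QCoh(X,\alpha)$ there is no obvious exact left adjoint to $k^*$. For your $j:U\to X$ with $U$ affine this is harmless, since $j_*$ is already exact by Lemme~\ref{lem2}; but $V$ and $U\cap V$ are not affine. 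The derived Mayer--Vietoris triangle does hold, but to justify it here you should argue as the paper does---restrict the candidate triangle to $U$ and to $V$ and use Lemme~\ref{lem1} to identify the pieces---rather than rely on a $K$-injective model. Once that is fixed, the remainder of your argument (propagating membership in $\Delta$ through $Rk_*$ via $R_{qc}p_*\circ v_*=(p\circ v)_*$) matches the paper exactly and is fine.
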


\begin{proof} Soit $\{U_i\}_{i\in \{1,2,...,n\}}$ un recouvrement affine de $X$. De plus, nous supposons 
que $n=n(X)$, le nombre minimal des affines \`{a} s\'{e}lectionner pour former un recouvrement 
de $X$. On proc\`{e}de par induction sur $n$. Quand $n=1$, le r\'{e}sultat est \'{e}vident. Supposons 
que ce r\'{e}sultat est vrai pour chaque sch\'{e}ma (quasi-compact et s\'{e}par\'{e}) $Z$ tel que 
$n(Z)<n(X)$. En particulier, prenons $X':=\cup_{i=1}^{n-1}U_i$. Alors, $n(X')<n(X)$. On a un morphisme naturel $p:X'\longrightarrow X$ et donc un foncteur d\'{e}riv\'{e}:
\begin{equation}
R_{qc}p_*:D(QCoh(X',p^*(\alpha)))\longrightarrow D(QCoh(X,\alpha))
\end{equation} Choisissons $\mathcal F^\bullet\in D(QCoh(X,\alpha))$. Il est clair que $(p^*,p_*)$ est un couple des foncteurs adjoints. De plus, puisque $p^*:QCoh(X,\alpha)\longrightarrow 
QCoh(X',p^*(\alpha))$ est exact, il s'ensuit que $(p^*,R_{qc}p_*)$ est un couple des foncteurs adjoints entre les cat\'{e}gories d\'{e}riv\'{e}es $D(QCoh(X,\alpha))$ et $D(QCoh(X',p^*(\alpha))$. Alors, on a un morphisme naturel $\mathcal F^\bullet\longrightarrow R_{qc}p_*(p^*\mathcal F)=
R_{qc}p_*(\mathcal F^\bullet|X')$ et donc un triangle distingu\'{e} dans $D(QCoh(X,\alpha))$:
\begin{equation}\label{2.17}
\mathcal G^\bullet \longrightarrow \mathcal F^\bullet \longrightarrow R_{qc}p_*(\mathcal F^\bullet |X')
\longrightarrow \mathcal G^\bullet [1]
\end{equation}

Puisque $n(X')<n(X)$, on a $\Delta(X',p^*(\alpha))=D(QCoh(X',p^*(\alpha)))$. Donc, $\mathcal F^\bullet |X'\in D(QCoh(X',p^*(\alpha)))$ est contenu dans la plus petite sous-cat\'{e}gorie triangul\'{e}e contenant tous les objets $v_*\mathcal H^\bullet$, o\`{u} $v:V\longrightarrow X'$ est une immersion
ouverte avec $V$ affine et $\mathcal H^\bullet\in D(QCoh(V,v^*p^*(\alpha)))$. De plus, si $v:V\longrightarrow X'$ est une immersion ouverte avec $V$ affine,  on voit que :
\begin{equation}\label{2.18}
(R_{qc}p_*)\circ v_*=R_{qc}p_*\circ R_{qc}v_*=R_{qc}(p\circ v)_*=(p\circ v)_* 
\end{equation} (puisque $v_*$ et $(p\circ v)_*$ sont des foncteurs exacts comme une 
cons\'{e}quence du Lemme \ref{lem2}). Il r\'{e}sulte de \eqref{2.18} que 
l'image du foncteur d\'{e}riv\'{e} $R_{qc}p_*:D(QCoh(X',p^*(\alpha)))=\Delta(X',p^*(\alpha))\longrightarrow 
D(QCoh(X,\alpha))$ est contenue dans $\Delta(X,\alpha)$. Donc, $R_{qc}p_*(\mathcal F^\bullet |X')
\in \Delta(X,\alpha)$. 

\medskip
Pour chaque $1\leq i\leq n$, nous notons par $u_i:U_i\longrightarrow X$ l'immersion ouverte
correspondant \`{a} $U_i\in ZarAff(X)$. On va \'{e}tablir que le morphisme naturel
$c:\mathcal G^\bullet\longrightarrow u_{n*}(\mathcal G^\bullet |U_n)$ est un isomorphisme en montrant que $c|U_i:\mathcal G^\bullet |U_i \longrightarrow (u_{n*}(\mathcal G^\bullet |U_n))|U_i$ est un isomorphisme pour chaque $1\leq i\leq n$. 

\medskip
Consid\'{e}rons d'abord le cas o\`{u} $i\ne n$. Puisque
$X'=\cup_{i=1}^{n-1}U_i$, on a un isomorphisme $\mathcal F^\bullet |U_i\overset{\cong}{\longrightarrow} 
R_{qc}p_*(\mathcal F^\bullet |X')|U_i$  et donc $\mathcal G^\bullet |U_i=0$. De plus, on a un carr\'{e} cart\'{e}sien:
\begin{equation}
\begin{CD}
U_n\times_XU_i @>u'_n>> U_i \\
@Vu'_iVV @Vu_iVV \\
U_n @>u_n>> X \\
\end{CD}
\end{equation} Appliquant Lemme \ref{lem1}, on a 
\begin{equation}
(u_{n*}(\mathcal G^\bullet |U_n))|U_i= u_i^*u_{n*}(\mathcal G^\bullet |U_n)\cong u'_{n*}u'^*_i(\mathcal G^\bullet |U_n)=u'_{n*}u'^*_iu_n^*\mathcal G^\bullet = u'_{n*}u'^*_n(\mathcal G^\bullet |U_i)=0
\end{equation} et donc un isomorphisme $c|U_i:\mathcal G^\bullet |U_i =0 \longrightarrow (u_{n*}(\mathcal G^\bullet |U_n))|U_i=0$ pour chaque $i\ne n$. 

\medskip
Il reste \`{a} montrer le cas o\`{u} $i=n$. Dans ce cas, il est clair que $c|U_n:\mathcal G^\bullet |U_n\longrightarrow (u_{n*}(\mathcal G^\bullet |U_n))|U_n=u_n^*u_{n*}(\mathcal G^\bullet |U_n)$ 
est un isomorphisme. Par suite, on a un isomorphisme:
\begin{equation}
c:\mathcal G^\bullet\overset{\cong}{\longrightarrow} u_{n*}(\mathcal G^\bullet |U_n)
\end{equation} et donc $\mathcal G^\bullet\in \Delta(X,\alpha)$. Puisque 
$R_{qc}p_*(\mathcal F^\bullet |X')$ est \' {e}galement dans $\Delta(X,\alpha)$, il r\'{e}sulte
du triangle \eqref{2.17} que $\mathcal F^\bullet \in \Delta(X,\alpha)$. Donc, $\Delta(X,\alpha)
=D(QCoh(X,\alpha))$. 

\end{proof}

\medskip

\medskip

\end{document}